\newtheorem{thm}{Theorem}
\newtheorem{cor}[thm]{Corollary}
\newtheorem{pro}[thm]{Proposition}
\theoremstyle{definition}
\newtheorem{defin}[thm]{Definition}
\newtheorem{rem}[thm]{Remark}
\newtheorem{exa}[thm]{Example}
\begin{document}

\title[Nikolskii's property]{Generalized Nikolskii's property and asymptotic exponent in Markov's inequality}



\author{Miros\l{}aw Baran   \and
        Agnieszka Kowalska 
}


\maketitle

\begin{abstract}
We introduce  an asymptotic Markov's exponent and show that it is equal to
Mar\-kov's exponent for a wide class of norms. However it is not
true for all norms in the space of polynomials, as it will be
presented in few examples. We shall prove an important inequality
$m(q)\geq m(E)$, where $q$ is a norm in $ \mathbb{P}(
\mathbb{C}^N)$ with Nikolskii's property related to $E$. As a
consequence we obtain a~lower bound for the optimal exponent in
Markov's inequality considered with the $L^p$ norms and other
norms possessing Nikolskii type property. \\
{\bf Keywords} Nikolski
property, Markov properties, Markov exponent, polynomial inequalities.\\
{\bf Mathematics Subject Classification (2000)} MSC 31C10, MSC 32U35, MSC 41A17.
\end{abstract}

\section{Introduction}
\label{intro}
By $ \mathbb{P}(\mathbb{K}^N)$ ($\mathbb{P}_d(\mathbb{K}^N)$,
respectively) we shall denote the vector space of all polynomials
of $N$ variables with coefficients in the field $\mathbb{K}$ (with
total degree $\leq d$). Let us recall the multivariate Markov's
inequality
\begin{defin}\label{MarkovSet}
A compact set $E\subset \mathbb{K}^N$ admit \emph{Markov's
inequality} if there exist constants $M,m>0$ such that for all
polynomials $P\in \mathbb{P}(\mathbb{K}^N)$ and $j\in\{1,2,\ldots,
N\}$
\begin{equation}\label{Markov}
\left\| \frac{\partial}{\partial x_j} P\right \|_E \le M \: (\deg
P)^{m} \: \|p\|_E \
\end{equation}
where $\|\cdot \|_E$ is the supremum norm on $E$.
\end{defin}
A compact set $E$ with the above property is called the \emph{Markov's set}.
It is a~generalization of the classical inequality proven by A. A. Markov in 1889,
 which gives such estimate on $[-1,1]$. The development of the theory of generalizations of this
 inequality is still continuing.  More information about the various generalizations of
  Markov's inequality can be found in \cite{Pl90},\cite{Milovanovic},\cite {Pl98},\cite{Govil},\cite{RahmanSchmeisser},\cite{Pl06}.
It is important to know more about the best exponent in this inequality for a given set $E$. The notion $m(E)$ called Markov's exponent was defined in \cite{BaranPlesniak}. For a Markov set $E$ it is $
m(E):=\inf\{ s\!>\!0 \ : E \mbox{ is Markov's set with exponent } s \}$.  If $E$ is not Markov's set, we put $m(E):=\infty$. It is known that $m(E)\ge 2$ in the real case and $m(E)\ge 1$ in the complex one. The surprising fact, proved in \cite{BBCM}, is that Markov's inequality does not have to fulfilled with Markov's exponent (see also \cite{Gon2014}).

Similarly we define Markov's exponent with respect to other norms, if $q$ is a norm on $ \mathbb{P}(\mathbb{K}^N)$ we can define Markov's exponent for the norm $q$ as
\[
m(q)=\inf\{ s\!>\!0  :\  \exists_{C>0} \forall_{P\in
\mathbb{P}(\mathbb{K}^N)} \forall_{1\leq j \leq N}
q\left(\frac{\partial}{\partial x_j} P\right) \le C \: (\deg
P)^{s} \: q(P)\}.
\]

The Markov type inequalities were also considered in $L^p$ norms
(cf. \cite{HSzT},\cite{Baran5},\cite{LBCSroka},\cite{BorweinErdelyi},\cite{Goe1979},\cite{Goe1990},\cite{Goe1994},\cite{Goe1998},\cite{Pierzchala4}). In this case a progression in research seems to be
slower except $L^2$ norms are
considered (cf. e.g. \cite{BottcherDorfler09},
\cite{BottcherDorfler10},\cite{BottcherDorfler11},\-\cite{Dorfler87},\cite{Dorfler02},
\cite{AleksovNikolovShadrin16},\cite{AleksovNikolov17}). In
particular, an
example of a compact set in $
\mathbb{R}^N$  with cusps for which Markov's exponent (with
respect to the Lebesgue measure) is calculated, is still out of reach.

 We can consider
Markov's inequality for any other norm. Then Markov's exponent can
even be equal to $0$.

\begin{exa}
For the norm $\|P\|=\sum_{k=0}^{\infty}|P^{(k)}(0)|$ we have
\[
\|P'\|=\sum_{k=1}^{\infty}|P^{(k)}(0)|\leq \|P\|.
\]
\end{exa}

However, if we have a spectral norm $q$ (it means for every polynomial $P\in \mathbb{P}(\mathbb{C}^N)$, $q(P^n)=(q(P))^n$) and Markov's inequality holds for this norm, then the exponent $m(q)$ has to be not less than $1$.
Indeed, let us consider polynomials $P_j(x)=x_j^n$, for $j\in\{1,2,\ldots,N\}$. Then
\[
\|x_j\|^n=\|P_j\|\geq \frac{1}{M^n(n!)^m}\left\|\frac{\partial
^nP_j}{\partial x_j^n}\right\| =
\frac{n!}{M^n(n!)^m}\|1\|=\frac{1}{M^n}(n!)^{1-m}.
\]
Hence
\[
\|x_j\|\geq \frac{1}{M}(n!)^{(1-m)\frac{1}{n}}.
\]
This inequality is possible only for $m\geq 1$.

\begin{rem} It was proved in \cite{BKMO},
 the Markov type condition
$$\left\| \frac{\partial}{\partial z_j}P\right\|_E\leq M(\deg P)^m\| P\|_E,\
j=1, \dots N,  P\in \mathbb{P}(\mathbb{C}^N)$$  with positive
constants $M$ and $m$ is equivalent to the inequality
$$\left\| \sum\limits_{j=1}^N\frac{\partial^{2l}P}{\partial z_j^{2l}}
\right\|_E\leq M'_l(\deg P)^{2lm}\| P\|_E,\ P\in
\mathbb{P}(\mathbb{C}^N)
$$ with some positive constant $M_l'$. Here $E\subset\mathbb{R}^N$ and $l\in\mathbb{Z}_+$ is fixed.
In particular, Markov's property with exponent $m$ is
equivalent to the bound $||\Delta P||_E\leq M'(\deg
P)^{2m}||P||_E$.
\end{rem}

\section{Nikolskii's property}\label{NikProp}
\begin{defin} Let $E$ be a compact subset of $\mathbb{C}^N$. A norm $q=||\cdot||$ on $ \mathbb{P}(\mathbb{C}^N)$ is {\it
$E$-admissible} or {\it has Nikolskii's property} if there exist
 constants: positive $A,B$ and nonnegative $a,b$ such that for every $P\in\mathbb{P}(\mathbb{C}^N)$  with $\deg P\geq 1$ we have
$$||P||_E \leq A(\deg P)^a ||P||\mbox{ and } ||P||\leq B(\deg P)^b ||P||_E.$$
\end{defin}

If $q=||\cdot ||$ is $E$-admissible then
$$||P||_E =\lim\limits_{s\rightarrow\infty}||P^s||^{1/s}.$$ Since the
supremum norm is the main example of spectral norm (see \cite{Zel}) we can generalize
the above definition.

\begin{defin}
A norm $q=||\cdot||$ on $\mathbb{P}(\mathbb{C}^N)$ is {\it spectral
admissible} or {\it has the generalized Nikolskii's property} if there
exist a spectral norm $||\cdot ||_{\sigma}$ and
 constants: positive $A,B$ and nonnegative $a,b$ such that for every $P\in\mathbb{P}(\mathbb{C}^N)$  with $\deg P\geq 1$ we have
$$||P||_{\sigma} \leq A(\deg P)^a ||P||\mbox{ and } ||P||\leq B(\deg P)^b ||P||_{\sigma}.$$\end{defin} The spectral norm is given by the formula
$$q_{\sigma}(P)=||P||_{\sigma}
=\lim\limits_{s\rightarrow\infty}||P^s||^{1/s}.$$
\bigskip
By way of illustration, here are examples of such norms.

\begin{exa} Let $E$ be a compact subset of $ \mathbb{C}$ and $r>0$ be fixed. Put (cf.
\cite{BKMO})
$$||P||=\sum\limits_{k=0}^\infty \frac{1}{k!}||P^{(k)}||_E\, r^k.$$
Then $$\lim\limits_{n\rightarrow\infty}||P^n||^{1/n}=\max\limits_{|\zeta
|\leq r}||P(x+\zeta )||_E.$$ Moreover for $||P||_\sigma:=\max\limits_{|\zeta
|\leq r}||P(x+\zeta )||_E$ we have
\[||P||_\sigma\leq ||P||\leq (\deg P+1)||P||_\sigma.\]
\end{exa}

\begin{exa} If $\mu $ is a probabilistic measure on $E$, then for $1\leq
s<\infty$ the norm

$$||P||=||P||_E+\left(\int\limits_E |P(z)|^sd\mu
(z)\right)^{1/s}$$ is $E$-admissible on $
\mathbb{P}(\mathbb{C}^N)$ with 
\[\lim\limits_{n\rightarrow\infty}||P^n||^{1/n} =\max
(||P||_E,{\rm ess}\sup\limits_E |P|)=||P||_E.\]
\end{exa}
\medskip

\begin{exa}
In the classical case of the interval $[-1,1]$ we have S.M.
Nikolskii's inequalities (cf. \cite{Nikolski},\cite{Timan},\cite{Milovanovic},\cite{Sroka})
\begin{align*}   \left(\frac{1}{2}\int\limits_{-1}^1|P(x)|^pdx\right)^{1/p}\leq &||P||_{[-1,1]}\\
\leq &(2(p+1)n^2)^{1/p}\left(
\frac{1}{2}\int\limits_{-1}^1|P(x)|^pdx\right)^{1/p}.\end{align*}
\end{exa}

\bigskip

\begin{exa}\label{Eop} (A generalization of Nikolskii's inequality) Let $\mu $ be a~pro\-babilistic measure on $E$ such that for a system of orthonormal
polynomials we have the inequality $||P||_E\leq B(\deg P)^\beta $ with some positive $\beta$, which is equivalent to the fact that for each polynomial $P$, $\deg P\geq 1,$
\begin{equation}\label{pEop}||P||_E\leq B_1(\deg P)^{\beta_1}||P||_2\end{equation} with some positive
constants $B_1,\beta_1$. Indeed, if $(P_\alpha )_{\alpha\in
\mathbb{N}^N}$ is an orthonormal system such that $\deg P_\alpha
=|\alpha |$ then for each polynomial $P$ with $\deg P\geq 1$, $||P||_E
\leq \binom{n+N}{n}\max\limits_{|\alpha |\leq n}|c_\alpha
|B|\alpha |^\beta$, where $c_\alpha=\int\limits_EP(z)
\overline{P_\alpha (z)}d\mu (z)$, so we  can take
$B_1=B\frac{2^N}{N!},\ \beta_1=\beta +N$.

Let us also note that the condition $||P||_E\leq B_1(\deg P
)^{\beta_1}||P||_2$ implies the inequality $||P||_E \leq B_1^{2/s}
(\lceil s\rceil )^{2\beta_1/s}(\deg P)^{\beta /s}||P||_{s}$,
$s\geq 1$. In particular, $||P||_E=||P||_\infty ={\rm
ess}\sup\limits_E|P|$.

Then for all $p\geq 1$ each norm $||P||_p=\left(\int\limits_E
|P(z)|^pd\mu (z)\right)^{1/p}$ is an $E$-admissible norm.
\end{exa}

\begin{rem} If $\mu$ is the normalized Lebesgue measure on a fat
compact set $E\subset\mathbb{R}^N$ then Nikolskii's inequality
implies Markov's property of $E$. It is a~consequence of main
results of \cite{Baran1}, \cite{Baran4} and \cite{Totik1} (cf.
\cite{Totik2,Totik3}) in one dimensional case.  Hence, if we want
to show that a given compact subset of $ \mathbb{R}^N$ possesses
Markov's property, it suffices to show Nikolskii's inequality
as in the example above. Generally, it is a very difficult task to
check Markov's property. Recently, a nontrivial result in this
topic has been obtained by R. Pierzcha\l{}a \cite{Pierzchala4}.
His remarkable result relates to a class of sets with a {\it
special parametric property} introduced by himself. This property
implies Nikolskii's inequality and thus Markov's property, as
it was noticed above (but it was not considered in
\cite{Pierzchala4}).

\end{rem}
\begin{exa} Let $E\subset \mathbb{R}^N$ and $\mu $ be a probabilistic
measure on $E$ with the following {\it density
condition}:
$$\exists G,\gamma >0\ \forall x\in E, r>0\ \ \ \mu (E\cap
\overline{B}(x,r))\geq Gr^\gamma .$$ Assuming $E$ has Markov's
property, one can prove (\ref{pEop}) for $E$. This method was used in the proof of Nikolskii's inequality in the
classical case (cf. \cite{Nikolski},\cite{Timan}) as well as in more
general situations investigated by A. Zeriahi \cite{Zer}, P.
Goetgheluck \cite{Goe1989} and A. Jonsson \cite{Jonsson2} (cf.
also
 \cite{Jonsson3}). Goetgheluck in \cite{Goe1989} proved
 that each UPC set in $
 \mathbb{R}^N$ (this wide family of sets was introduced by W.~Paw\l{}ucki and W.~Ple\'sniak in \cite{PaPl86}) satisfies the density condition and also by \cite{PaPl86}
 has Markov's property. Therefore each UPC set (in particular each
 compact fat subanalytic subset of $\mathbb{R}^N$, cf. \cite{PaPl86,PaPl88} for this deep result) satisfies the
 generalized Nikolskii's inequality with respect to the normalized
 Lebesgue measure $\mu$ and Markov's
 inequality in $L^p(\mu )$. However, no example is known of a set with cusp for which Markov's exponent (in $L^p(\mu ),\
 1\leq p<\infty$) is calculated.
\end{exa}

\begin{exa} Let $E\subset \mathbb{R}^N$. Put
\begin{equation}\label{NEi}
||P||=||P||_E+\int\limits_{int(E)}|D_jP(x)|dx.
\end{equation}
Since
$$
\int\limits_{int(E)}|D_jP(x)|dx\leq
\sqrt{N}\pi^N({\rm diam} (E))^{N-1}(\deg P)||P||_E,
$$
the norm $\|\cdot\|$ defined by (\ref{NEi}) is $E$-admissible. Ihe
last inequality follows from \cite{Baran1}, \cite{Baran4} and
\cite{Totik1}.
\end{exa}

\begin{exa}\label{ExSchur} Let $||P||=\sup\limits_{x\in
[-1,1]}|P(x)|\sqrt{1-x^2}$ be Schur's norm. Since \[||P||\leq
||P||_{[-1,1]}\leq (\deg P+1)||P||,\] Schur's norm is $[-1,1]$-admissible. Similarly, if we put \[||P||_\alpha =\sup\limits_{x\in
[-1,1]}|P(x)|(1-x^2)^{\alpha},\ \alpha >0,\] then (cf.
\cite{Baran5} for $\alpha \geq \frac{1}{2})$ the norm
$||\cdot||_\alpha$ is $[-1,1]$-admissible. Moreover,
if we replace the interval $[-1,1]$ by the unit closed ball $B:=\{ x\in
\mathbb{R}^N:\ ||x||_*\leq 1\}$ with respect to a fixed norm $||\cdot ||_{*}$ in $\mathbb{R}^N$ then the norm defined by \[||P||_\alpha =\sup\limits_{x\in
B}|P(x)|(1-||x||^2)^\alpha\] is $B$-admissible. A more general situation is contained
in the following way (cf. \cite{Baran3}). Let $\Omega$ be a bounded,
star-shaped (with respect to the origin) and symmetric domain in $
\mathbb{R}^N$ and let $E=\overline{\Omega}$. Let $v\in
S^{N-1}$ be a fixed direction, we assume that $\rho_v(tx)\geq M(1-|t|)^m,
t\in [-1,1], x\in \partial E$. Then for any $\alpha >0$, the norm
$||P||_\alpha =\sup\{ |P(tx)|(1-|t|)^\alpha:\ x\in\partial E,t\in
[-1,1]\}$ is $E$-admissible. Note that in this case $m(E,v)\leq
2m$ (cf. the first definition in the next section with
$D=v_1D_1+\dots+v_ND_N$).
\end{exa}

\begin{rem} The Schur inequality in Example \ref{ExSchur} is a special case
of the division type inequality, which is often called the Schur type
inequality. It was proved in \cite{LBC99} that on the complex
plane properties related to Markov's and Schur's inequalities are
equivalent.

\end{rem}

\begin{rem} If we have some norms with the generalized Nikolskii's property
(GNP), we can easily construct many other norms with
this property. For example, if $q_1,q_2$ have GNP (with spectral
norms $q_{1,\sigma},q_{2,\sigma}$) then
$q(P)=\left(q_1(P)^p+q_2(P)^p\right)^{1/p},\ 1\leq p\leq \infty$
has GNP with $q_\sigma=\max (q_{1,\sigma},q_{2,\sigma})$.
\end{rem}

\begin{rem} Let $||\cdot ||_0$ be a spectral norm in $ \mathbb{P}( \mathbb{C}^N)$ and let $||\cdot
||_1$ be a GNP norm with respect to $||\cdot ||_0$. If
$\alpha_j\in\mathbb{Z}_+^N$, $j=1,\dots ,l$  are fixed then we can
consider
$$||P||=||P||_0+\max\limits_{1\leq j\leq l}||D^{\alpha_j}P||_1.$$

We have $\lim\limits_{n\rightarrow\infty}||P^s||^{1/s}=||P||_0$
but GNP will be satisfied if and only if we have a
Markov-Nikolskii type bound
$$\max\limits_{1\leq j\leq l}||D^{\alpha_j}P||_1\leq C(\deg
P)^\gamma  ||P||_0.$$

Let us give two examples.

If $||P||=||P||_{[-1,1]\cup\{ 2\}}+||P'||_{[-1,1]\cup\{ 2\}}$ then
this norm does not satisfy GNP.

Now we define $||P||=||P||_E+||\frac{\partial P}{\partial x}||_{E}$, where
$E=\{ (x,y)\in\mathbb{R}^2:\ |x|<1,\ |y|\leq \exp
(-1/(1-|x|))\}\cup\{ (-1,0),(1,0)\}$. Since (cf. \cite{Baran3})
\[||\frac{\partial P}{\partial x}||_{E}\leq 2(\deg P)^2||P||_E,\]
the norm $||\cdot||$ possesses GNP.
\end{rem}

\section{Asymptotic exponent in Markov's inequality}\label{AsympExp}

Let $\varphi =(\varphi_1,\dots
,\varphi_N)\in\mathcal{C}^{\infty}(\mathbb{K}^N)^N$ (if $
\mathbb{K}=\mathbb{C}$ we understand that
$\varphi_j\in\mathcal{C}^{\infty}(\mathbb{R}^{2N}))$. We assume
that $\varphi_j$ can  take complex values. In particular, we can
consider $\varphi_j\equiv v_j\in\mathbb{C}$ for $j\in\{1,\ldots,N\}$ and then $\varphi
=v\in\mathbb{C}^N$. Define
$$D=D_\varphi =\varphi_1 D_1+\dots
+\varphi_ND_N:\mathcal{C}^\infty( \mathbb{K}^N)\longrightarrow
\mathcal{C}^\infty( \mathbb{K}^N)$$ and put $D^{(k)}=D\circ
\dots\circ D$ $k$-times.

Let us recall a deep identity (cf.
\cite{Milowka},\cite{BMO},\cite{BKMO})
\begin{equation}\label{DI}
(D(f))^k=\frac{1}{k!}\sum \limits _{j=0}^{k}(-1)^j\binom {k}{j}
f^{j}D^{(k)}(f^{k-j}).
\end{equation}

\begin{defin}  Let $q= ||\cdot ||$ be a norm in $
\mathbb{P}(\mathbb{K}^N)$. If $H$ is a homogenous polynomial of
$N$ variables of degree $k\geq 1$ then we consider a differential
operator $D=H(D_1,\dots ,D_N)$ and define
$$m(H,q)=\inf\{s>0:\exists M>0\forall P\in  \mathbb{P}(\mathbb{K}^N)\
||DP||\leq M(\deg P)^s ||P||\}.$$
For $\alpha\in\mathbb{N}^N$ and $H_{\alpha}(x)=x^\alpha$, $x\in\mathbb{K}^N$, we put
$m(\alpha ,q)=m(H_{\alpha},q)$ and $m(q)=\max\limits_{1\leq j\leq
N}m(e_j,q)$. For $k\geq 1$ we put $m_k(q)=\max\{ m(\alpha ,q):\ |\alpha |=k\}$. In particular, $m_1(q)=m(q)$ is {\it Markov's exponent for a norm} $q$. \end{defin}

\begin{rem}
In a special case, if $q(P)=||P||_E$, where $E$ is a compact
subset of $\mathbb{K}^N$, then we define $m(H,E)=m(H,q)$,
$m(\alpha, E)=m(\alpha ,q)$, $m_k(E)=m_k(q)$, $m(E)=m(q)$.
Moreover the last one is Markov's exponent of $E$ which was
recalled in the first section and if $m(E)<\infty$ we say that $E$
has {\it Markov's property}. Let us note the equality (for
subsets of $ \mathbb{R}^N$, cf. \cite{BKMO})
$$m(H_k,E)=km(E),$$ where $H_k(x_1,\dots ,x_N)=x_1^k+\dots +x_N^k$ ($k$ is a fixed
positive even integer).

Since
$$m(\alpha ,q)\leq m(e_1,q)\alpha_1+\dots +m(e_N,q)\alpha_N\leq
\max\limits_{1\leq j \leq N}m(e_j,q)|\alpha |=m(q)|\alpha |,$$ we get the inequality
$$ m_k(q)\leq km(q)\ \Rightarrow\ \frac{1}{k}m_k(q)\leq
m(q).$$\end{rem}
\begin{rem}\label{ThmMil}
From \cite{Milowka} we have $\frac{1}{k}m_k(E)=m(E)$. Therefore
$$\lim\limits_{k\rightarrow\infty} \frac{1}{k}m_k(E)=m(E).$$
\end{rem}
\begin{defin}
Let $q$ be a norm in $
\mathbb{P}(\mathbb{K}^N)$. We define {\it the asymptotic exponent} for $q$,
$$m^*(q):=\limsup\limits_{k\rightarrow\infty} \frac{1}{k}m_k(q).$$
\end{defin}

\begin{rem} Let us note a few basic properties of the above notion.

\begin{itemize}
\item[a)]  If $q_1$ and $q_2$ are two norms on $ \mathbb{P}(
\mathbb{K}^N)$ such that
$$q_1(P) \leq A(\deg P)^a q_2(P),\ q_2(P)\leq B(\deg P)^b q_1(P)
,\ \deg P\geq 1,$$ then $m^*(q_1)=m^*(q_2)$.
\medskip

\item[b)] If $q_1(P)={\rm essup}_\mu |P|$ then
$m^*(q_2)=m(q_1)$.

\item[c)] We have $m^*(q)\leq m(q)$. In general, these exponents do
not need to be equal.
\end{itemize}
\end{rem}
\medskip

Now, we give an example of the norms for
which $m^*(q) < m(q)$. First, we need the following

\begin{pro}\label{EfN0}
For $||\cdot||_0$ a seminorm on $ \mathbb{P}(\mathbb{C})$, $m >0$ and $s\in\mathbb{N}_1$ we define the norm
\[
q_{m,s}(P)=||P||_{m,s}=\sum\limits_{r=0}^\infty \frac{1}{((rs)!)^m} ||P^{(rs)}||_0.
\]

If for every $s\geq 2$ there exist positive constants $A, B$ such that for every $j\in\{1,\ldots,s\}$ and $P\in \mathbb{P}(\mathbb{C})$, $\|P^{(j)}\|_0\leq A \|P\|_0 +B\|P^{(s)}\|_0$,
then  $m_k(q_{m,s})\leq sm\lceil\frac{k}{s}\rceil$ for every $k\in\mathbb{N}_1$.
\end{pro}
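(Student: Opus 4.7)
The plan is to bound the series
\[
q_{m,s}(P^{(k)}) = \sum_{r=0}^{\infty}\frac{1}{((rs)!)^m}\,\|P^{(k+rs)}\|_0
\]
term by term, first reducing each derivative-index to a multiple of $s$ via the hypothesis and then comparing the resulting series with $q_{m,s}(P)$ after a re-indexing. To set things up, I would write $k = qs + j$ with $0 \leq j < s$, so that $\lceil k/s\rceil = q$ when $j=0$ and $\lceil k/s\rceil = q+1$ otherwise; note that $k + rs = (q+r)s + j$, hence $P^{(k+rs)} = \bigl(P^{((q+r)s)}\bigr)^{(j)}$.

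If $j = 0$ the individual terms are already in the right form. If $1 \leq j \leq s-1$, apply the hypothesis to $P^{((q+r)s)}$ to obtain
\[
\|P^{(k+rs)}\|_0 \leq A\,\|P^{((q+r)s)}\|_0 + B\,\|P^{((q+r+1)s)}\|_0.
\]
Summing over $r$ and substituting $r' = q+r$ (respectively $r' = q+r+1$) converts the two series into tails of $q_{m,s}(P)$, but with $((r'-q)s)!$ or $((r'-q-1)s)!$ in the denominators instead of $(r's)!$. To absorb this discrepancy I use the ratio estimate
\[
\frac{(r's)!}{((r'-q)s)!} = (r's)(r's-1)\cdots((r'-q)s+1) \leq (r's)^{qs}.
\]
The key observation is that $P^{(r's)} = 0$ whenever $r's > \deg P$, so on the nonzero terms $r's \leq \deg P$ and the ratio is bounded by $(\deg P)^{qs}$; taking $m$-th powers gives $1/((r'-q)s)!^m \leq (\deg P)^{qsm}/(r's)!^m$, and similarly with $q$ replaced by $q+1$ for the second tail.

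Putting these estimates back, the first tail is bounded by $A(\deg P)^{qsm}\,q_{m,s}(P)$ and the second by $B(\deg P)^{(q+1)sm}\,q_{m,s}(P)$ (the $j = 0$ case is just the first with $B = 0$). Since $\lceil k/s\rceil$ equals $q$ or $q+1$ according as $j = 0$ or $j > 0$, in every case we arrive at
\[
q_{m,s}(P^{(k)}) \leq (A+B)(\deg P)^{sm\lceil k/s\rceil}\,q_{m,s}(P),
\]
which gives precisely $m_k(q_{m,s}) \leq sm\lceil k/s\rceil$. The argument is essentially a bookkeeping exercise; the only subtle point is the vanishing observation $P^{(r's)} = 0$ for $r's > \deg P$, without which the factor $(r's)^{qs}$ would not be polynomial in $\deg P$ and the comparison with $q_{m,s}(P)$ would fail.
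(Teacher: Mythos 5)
Your argument is correct and is essentially the paper's own proof: both reduce $k$ modulo $s$, use the hypothesis to shift each derivative index to a multiple of $s$, truncate the series via the vanishing of $P^{(r's)}$ for $r's>\deg P$, and absorb the factorial-ratio discrepancy into a factor $(\deg P)^{sm\lceil k/s\rceil}$. The only difference is bookkeeping (you take $j\in\{0,\dots,s-1\}$ and treat $j=0$ separately, the paper takes $j\in\{1,\dots,s\}$), which is immaterial.
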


\begin{proof} For every $m>0$, $t,s\in\mathbb{N}_1$, $j\in\{1,\ldots,s\}$ and $P\in \mathbb{P}(\mathbb{C})$ we obtain
\begin{align*}
||P^{(st+j)}||_{m,s}=&\sum\limits_{r=0}^\infty \frac{1}{((rs)!)^m}
\|P^{(rs+st+j)}\|_0\\
 \leq & A \sum\limits_{r=0}^\infty
\frac{1}{((rs)!)^m} \|P^{(rs+st)}\|_0\\
&+\max\{B,1\}\sum\limits_{r=0}^\infty \frac{1}{((rs)!)^m}
\|P^{(rs+st+s)}\|_0\\
 \leq & A \sum\limits_{r=0}^{[\frac{\deg
P}{s}]} \frac{1}{((rs)!)^m} \|P^{(rs+st)}\|_0\\
&+\max\{B,1\}\sum\limits_{r=0}^{[\frac{\deg P}{s}]}
\frac{1}{((rs)!)^m} \|P^{(rs+st+s)}\|_0\\
= & A \sum\limits_{r=t}^{[\frac{\deg P}{s}]+t} \frac{1}{(((r-t)s)!)^m} \|P^{(rs)}\|_0\\
&+\max\{B,1\}\sum\limits_{r=t+1}^{[\frac{\deg P}{s}]+t+1}
\frac{1}{(((r-t-1)s)!)^m} \|P^{(rs)}\|_0\\
\leq & (A+\max\{B,1\})(\deg P)^{s(t+1)m}\sum\limits_{r=0}^\infty \frac{1}{((rs)!)^m}
 \|P^{(rs)}\|_0\\ =& (A+\max\{B,1\})(\deg P)^{s(t+1)m}||P||_{m,s}.
\end{align*}
\end{proof}

\begin{exa} Let us consider the norms $q_{m,s}$ defined like in Proposition \ref{EfN0} with seminorm $||P||_0=\sum\limits_{l=0}^{s-1} \frac{1}{l!}|P^{(l)}(0)|$, $s\in\mathbb{N}_1$. Then for every $P\in \mathbb{P}(\mathbb{C})$ and $j\in\{1,\ldots,s-1\}$ we have
\begin{align*}
\|P^{(j)}\|_0=&\sum\limits_{l=0}^{s-1} \frac{1}{l!}|P^{(j+l)}(0)|=\sum\limits_{l=j}^{s-1} \frac{l!}{l!(l-j)!}|P^{(l)}(0)|\\ &+\sum\limits_{l=0}^{j-1} \frac{l!}{l!(s+l-j)!}|P^{(s+l)}(0)|\\
\leq & (s-1)^{s-1}\sum\limits_{l=0}^{s-1} \frac{1}{l!}|P^{(l)}(0)| +\sum\limits_{l=0}^{s-1} \frac{1}{l!}|P^{(s+l)}(0)|\\
\leq & (s-1)^{s-1}\|P\|_0 +\|P^{(s)}\|_0
\end{align*}
From Proposition \ref{EfN0} for $m>0$ and $s\in\mathbb{N}_1$ we obtain $m_k(q_{m,s})\leq sm\lceil\frac{k}{s}\rceil$.

On the other hand for every $m>0$ and $s,n\in\mathbb{N}_1$ we have
$$||x^{sn}||_{m,s}=\sum\limits_{r=0}^\infty \left(
\frac{1}{(rs)!}\right)^m \sum\limits_{l=0}^{s-1} \frac{1}{l!}|(x^{sn})^{(rs+l)}(0)|=\frac{1}{(sn)!^{m-1}}.
$$
and
\begin{align*}
||(x^{sn})^{(st+j)}||_{m,s}=&\sum\limits_{r=0}^\infty \left(
\frac{1}{(rs)!}\right)^m \sum\limits_{l=0}^{s-1} \frac{1}{l!}|(x^{sn})^{(rs+st+j+l)}(0)|\\
=&\frac{(sn)!}{(s-j)!(sn-st-s)!^{m}}.
\end{align*}

Hence for every $k\in\mathbb{N}_1$ we have $m_k(q_{m,s})=sm\lceil\frac{k}{s}\rceil$, where for $x\in\mathbb{R}$, $\lceil
x\rceil$ is the smallest integer greater than or equal to $x$.
From this it follows that $m^*(q_{m,s})=m$ and $m(q_{m,s})=sm$.

\end{exa}

\medskip

Now we formulate main results of this paper.
\medskip

\begin{thm} Let $q$ be a spectral admissible norm for some spectral norm $q_{\sigma}$. Then
$$m^*(q)=\lim\limits_{k\rightarrow\infty} \frac{1}{k}m_k(q)= m(q_{\sigma}).$$
In particular, $m(q_{\sigma})\leq m(q)$.\end{thm}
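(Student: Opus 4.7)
The plan is to sandwich $m^*(q)$ between $m(q_\sigma)$ on both sides by combining two complementary devices: a Nikolskii-type transfer of exponents between $q$ and $q_\sigma$ (which costs only additive constants $a,b$ and hence disappears after division by $k$), and the differential identity (\ref{DI}) applied to the spectral norm (which forces $m(q_\sigma)$ to coincide with the asymptotic ratio $m_k(q_\sigma)/k$).

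First I would observe that chaining the admissibility bounds $\|P\|_\sigma\le A(\deg P)^a\|P\|$ and $\|P\|\le B(\deg P)^b\|P\|_\sigma$ through any Markov-type inequality for $D^{\alpha}P$ (whose degree is at most $\deg P$) yields in both directions
\[|m(\alpha,q)-m(\alpha,q_\sigma)|\le a+b,\qquad\text{hence}\qquad|m_k(q)-m_k(q_\sigma)|\le a+b\]
for every $|\alpha|=k\ge 1$. Dividing by $k$ shows that the two sequences $m_k(q)/k$ and $m_k(q_\sigma)/k$ share the same $\liminf$, $\limsup$, and (when it exists) limit; in particular $m^*(q)=m^*(q_\sigma)$. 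It therefore suffices to prove the theorem with $q$ replaced by $q_\sigma$.

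The heart of the proof is to apply identity (\ref{DI}) with $D=D_l$ (for fixed $l\in\{1,\dots,N\}$) and $f=P$ and then take $q_\sigma$ of both sides. Using $q_\sigma(R^k)=q_\sigma(R)^k$ on the left, sub-multiplicativity of $q_\sigma$ to split $q_\sigma(P^j D_l^{(k)}(P^{k-j}))\le q_\sigma(P)^j\,q_\sigma(D_l^{(k)}(P^{k-j}))$, the spectral property once more on $q_\sigma(P^{k-j})=q_\sigma(P)^{k-j}$, and the definition of $m_k(q_\sigma)$ (with arbitrary $\epsilon>0$ and constant $C_{k,\epsilon}$), one obtains
\begin{align*}
q_\sigma(D_l P)^k
&\le \frac{C_{k,\epsilon}}{k!}\sum_{j=0}^{k}\binom{k}{j}\bigl((k-j)\deg P\bigr)^{m_k(q_\sigma)+\epsilon} q_\sigma(P)^{k}\\
&\le \frac{C_{k,\epsilon}\,2^{k}}{k!}\,(k\deg P)^{m_k(q_\sigma)+\epsilon}\,q_\sigma(P)^{k}.
\end{align*}
Taking $k$-th roots produces a Markov-type estimate for $D_l$ with exponent $(m_k(q_\sigma)+\epsilon)/k$; letting $\epsilon\downarrow 0$ and maximizing over $l$ gives $m(q_\sigma)\le m_k(q_\sigma)/k$ for every $k\ge 1$.

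To conclude, combine the last inequality with the general bound $m^*(q_\sigma)\le m(q_\sigma)$ from Remark (c):
\[m(q_\sigma)\le \liminf_{k\to\infty}\frac{m_k(q_\sigma)}{k}\le \limsup_{k\to\infty}\frac{m_k(q_\sigma)}{k}=m^*(q_\sigma)\le m(q_\sigma),\]
so equality holds throughout, the limit $\lim_k m_k(q_\sigma)/k$ exists and equals $m(q_\sigma)$, and by the first paragraph the same is true for $q$. The ``in particular'' clause is then immediate: $m(q_\sigma)=m^*(q)\le m(q)$. The single technical point that I expect to require care is the sub-multiplicativity of $q_\sigma$ exploited in the main estimate; this follows in the standard framework from the representation $q_\sigma(P)=\lim_s q(P^s)^{1/s}$ combined with the Gelfand-type behaviour of spectral norms on the commutative algebra of polynomials (cf.~\cite{Zel}), and it is precisely what makes the telescoping in the differential identity collapse to $q_\sigma(P)^k$.
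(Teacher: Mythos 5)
Your proposal is correct and follows essentially the same route as the paper: the additive $a+b$ transfer of exponents between $q$ and $q_\sigma$ via the admissibility bounds, the identity (\ref{DI}) combined with spectrality and submultiplicativity of $q_\sigma$ to get $m(q_\sigma)\le m_k(q_\sigma)/k$, and the elementary bound $m_k\le km$ to close the sandwich. The only cosmetic differences are the order of the steps (the paper first establishes $m_k(q_\sigma)=km(q_\sigma)$ and then transfers, while you transfer first) and the reference for submultiplicativity (the paper invokes Dedania's theorem \cite{De} that a seminorm with the square property is automatically submultiplicative).
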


\begin{cor} Let $q$ be an $E$-admissible norm. Then
$$m^*(q)=\lim\limits_{k\rightarrow\infty} \frac{1}{k}m_k(q)= m(E).$$
In particular, $m(E)\leq m(q)$.\end{cor}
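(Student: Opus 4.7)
The plan is to prove an exact identity at the level of the spectral norm $q_\sigma$, namely
\[ m_k(q_\sigma)=k\cdot m(q_\sigma)\quad\text{for every }k\ge 1,\]
and then to transfer it back to $q$ using the polynomial equivalence built into the definition of spectral admissibility. This is the natural extension of Milowka's formula (Remark \ref{ThmMil}) from sup norms to arbitrary spectral norms, and it is sharper than the claim: it shows that $\tfrac{1}{k}m_k(q_\sigma)$ is already constant in $k$, not merely convergent.

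The transfer is routine. The bounds $q_\sigma(P)\le A(\deg P)^aq(P)$ and $q(P)\le B(\deg P)^bq_\sigma(P)$ from spectral admissibility feed directly into the definition of $m_k(\cdot)$ to give $|m_k(q)-m_k(q_\sigma)|\le a+b$ for every $k\ge 1$; dividing by $k$ kills this additive discrepancy, so $\tfrac1km_k(q)$ and $\tfrac1km_k(q_\sigma)$ share the same limit whenever either exists.

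The core step is the identity for $q_\sigma$ itself. The inequality $m_k(q_\sigma)\le km(q_\sigma)$ was already recorded in the paper by iterating single-variable Markov bounds along a multi-index of length $k$. For the reverse, I follow Milowka's argument. Fix an index $j$, a power $k$, and $s>m_k(q_\sigma)\ge m(ke_j,q_\sigma)$, so that the Markov-type bound $q_\sigma(D_j^{(k)}Q)\le M(\deg Q)^s q_\sigma(Q)$ is available. Plug $D=D_j$ and $f=P$ into identity (\ref{DI}), take $q_\sigma$ of both sides, and use: spectrality $q_\sigma(R^k)=q_\sigma(R)^k$ on the left; on the right, the triangle inequality, submultiplicativity of $q_\sigma$, and the Markov bound applied with $Q=P^{k-i}$. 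The estimate collapses to
\[ q_\sigma(D_jP)^k\le \frac{2^kM}{k!}\,(k\deg P)^s\,q_\sigma(P)^k, \]
and extracting the $k$-th root, using $(k!)^{1/k}\to\infty$, gives $q_\sigma(D_jP)\le C_k(\deg P)^{s/k}q_\sigma(P)$ with bounded $C_k$. Hence $m(D_j,q_\sigma)\le s/k$; letting $s\downarrow m_k(q_\sigma)$ and maximizing over $j$ yields $km(q_\sigma)\le m_k(q_\sigma)$, completing the identity. The transfer step then gives $\tfrac1km_k(q)\to m(q_\sigma)$, so $m^*(q)=m(q_\sigma)$, and the concluding bound $m(q_\sigma)\le m(q)$ is just $m^*(q)\le m(q)$ from part (c) of the remark after the definition of $m^*$.

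The main obstacle is justifying submultiplicativity of the abstract spectral norm $q_\sigma$, which is used to split $q_\sigma\bigl(P^i D_j^{(k)}(P^{k-i})\bigr)$ into $q_\sigma(P)^iq_\sigma(D_j^{(k)}(P^{k-i}))$. For genuine sup norms this is immediate; in the general setting it is a standard property of uniform (spectral) norms on the commutative polynomial algebra, accessible via the characterization cited in the paper through \cite{Zel}. Once submultiplicativity is in hand, the remainder is purely combinatorial bookkeeping on identity (\ref{DI}).
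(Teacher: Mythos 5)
Your proposal is correct and follows essentially the same route as the paper: the identity $m_k(q_\sigma)=km(q_\sigma)$ is obtained from identity (\ref{DI}) together with spectrality and submultiplicativity of $q_\sigma$ (the paper justifies the latter via Dedania's theorem \cite{De}), and the admissibility inequalities contribute only an additive $a+b$ to $m_k$, which disappears after dividing by $k$. The only cosmetic difference is that you take $k$-th roots explicitly and cite \cite{Zel} for submultiplicativity where the paper works with $\varepsilon$-perturbed exponents and cites \cite{De}; for the corollary itself $q_\sigma=\|\cdot\|_E$ is a sup norm, so the submultiplicativity issue you flag is immediate there.
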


\begin{proof} Firstly, we prove that $m_k(q_{\sigma})=km(q_{\sigma}),\
k\geq 1$.
If for every $j\in\{1,\ldots, N\}$ there exist positive constants $M_j, m_j$ such that for every polynomial $P\in\mathbb{P}_n(\mathbb{K}^N)$,
$$
\|D_jP\|\leq M_j n^{m_j}\|P\|
$$
then for $\alpha\in\mathbb{N}_0^N$ such that $|\alpha|=k$ we have
\begin{align*}
\|D^{(\alpha_1,\ldots,\alpha_N)}P\|\leq& M_1^{\alpha_1}\cdot\ldots\cdot M_N^{\alpha_N} n^{\alpha_1m_1+\ldots+\alpha_Nm_N}\|P\|\\ \leq & (\max_{j\in\{1,\ldots, N\}}M_j)^k n^{km}\|P\|,
\end{align*}
where $m=\max_{j\in\{1,\ldots,N\}}m_j$.
Hence $m_k(q)\leq km(q)$ for every norm $q$.

On the other hand
$$
(D_jP)^k=\frac{1}{k!}\sum_{j=0}^k (-1)^j \binom {k}{j}
P^{j}\frac{\partial^k}{\partial x_j^k}P^{k-j}.
$$
The norm $q_{\sigma}$ is spectral and by the Theorem in \cite{De}
it is submultiplicative. Hence, if an $\varepsilon >0$ is fixed,
\begin{align*}
\|(D_jP)\|_{\sigma}^k\leq & const. (\varepsilon )
\frac{1}{k!}\sum_{j=0}^k \binom {k}{j}
\|P\|_{\sigma}^{j}(n(k-j))^{m_k(q_{\sigma})+\varepsilon}\|P\|_{\sigma}^{k-j}\\
\leq & const.(\varepsilon )\frac{2^k}{k!}(nk)^{m_k(q_\sigma
)+\varepsilon}||P||_\sigma^k,
\end{align*}
which shows that $m(q_\sigma )\leq m_k(q_\sigma )/k+\varepsilon
/k$. Letting $\varepsilon\rightarrow 0+$ we get the inequality
$m(q_\sigma )\leq m_k(q_\sigma )/k$ and finally
$m_k(q_{\sigma})=km(q_{\sigma}),\ k\geq 1$.
\medskip

Now, let $s>m(\alpha ,q_\sigma)$. Then
\begin{align*}
||D^\alpha P||\leq & B(\deg P)^b||D^\alpha P||_\sigma\leq
BM_s(\deg
P)^{b+s}||P||_\sigma\\
\leq & BM_sA(\deg P)^{b+a+s}||P||,
\end{align*} which gives
$$m(\alpha ,q)\leq b+a+s\ \Rightarrow\ m(\alpha ,q)\leq
b+a+m(\alpha,q_\sigma)
$$
and therefore $ m_k(q)\leq b+a+ m_k(q_\sigma)=b+a+km(q_\sigma).$
Hence
$$m^*(q)=\limsup\limits_{k\rightarrow\infty}\frac{1}{k}m_k(q)\leq m(q_\sigma).$$
Analogously, let $s>m(\alpha ,q)$. Then
\begin{align*}
||D^\alpha P||_\sigma\leq & A(\deg P)^{a}||D^\alpha P||\leq
AM_s'(\deg
P)^{a+s}||P||\\
\leq & ABM_s'(\deg P)^{a+b+s}||P||_\sigma,
\end{align*} which implies
$m(\alpha ,q_\sigma )\leq a+b+s$ and $m(\alpha ,q_\sigma)\leq
a+b+m(\alpha,q),$ whence $ km(q_\sigma)=m_k(q_\sigma)\leq
a+b+m_k(q).$ Hence
$$m(q_\sigma)\leq \liminf\limits_{k\rightarrow\infty}\frac{1}{k}m_k(q)\leq
\limsup\limits_{k\rightarrow\infty}\frac{1}{k}m_k(q)\leq
m(q_\sigma).$$
\end{proof}
\bigskip

The second statement in the following important corollary is very
useful. Also the third statement gives new result.

\begin{cor}
\begin{itemize}
\item[a)] If a norm $q$ has GNP with the spectral norm $q_\sigma$ then
\[m^*(q)=m(q)\ \ \Leftrightarrow\ \ m(q)=m(q_\sigma).\]
\item[b)] If for a norm $q=||\cdot ||$ we have Markov's inequality
$$||D_jP||\leq M(\deg P)^{m(q_\sigma )}||P||,\ j=1,\dots,N$$ then
the exponent $m(q_\sigma )$ is the best possible. In particular,
$m(q)=m(q_\sigma )$.
\item[c)] If $E$ is an UPC subset of $\mathbb{R}^N$, then
$m_p(E)\geq m(E)$, where $m_p(E)$ is Markov's exponent with
respect to the Lebesgue measure.
\end{itemize}
\end{cor}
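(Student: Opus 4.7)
The plan is to derive all three parts directly from the Theorem and its first Corollary, together with the basic inequality $m^*(q)\leq m(q)$ recorded before the Proposition. I would take the parts in the order (a), (b), (c); in each case the work is essentially a matter of assembling pieces already available in the paper.

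For (a), I would start from the conclusion $m^*(q)=m(q_\sigma)$ supplied by the Theorem and combine it with the general bound $m^*(q)\leq m(q)$. The equivalence $m^*(q)=m(q)\ \Leftrightarrow\ m(q)=m(q_\sigma)$ then reduces to a one-line substitution: either side forces the common value of $m^*(q)$ and $m(q_\sigma)$ to coincide with $m(q)$.

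For (b), I would observe that the hypothesis $\|D_jP\|\leq M(\deg P)^{m(q_\sigma)}\|P\|$ is, by the very definition of $m(q)$, the assertion that $m(q)\leq m(q_\sigma)$. The Theorem provides the reverse bound $m(q_\sigma)\leq m(q)$, so the two exponents coincide. Hence $m(q_\sigma)$ is actually the infimum in the definition of $m(q)$, so it cannot be lowered, and in particular $m(q)=m(q_\sigma)$.

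For (c), I would invoke the first Corollary specialised to an $E$-admissible norm, which gives $m(E)\leq m(q)$. It then suffices to check that on an UPC set $E\subset\mathbb{R}^N$ the norm $q(P)=\|P\|_{L^p(\mu)}$ (with $\mu$ the normalized Lebesgue measure) is $E$-admissible. The bound $\|P\|_{L^p(\mu)}\leq \|P\|_E$ is trivial; for the reverse Nikolskii-type bound $\|P\|_E\leq B(\deg P)^{\beta}\|P\|_{L^p(\mu)}$ I would appeal to the examples above, since UPC sets have Markov's property by the Paw\l ucki--Ple\'sniak theorem and satisfy Goetgheluck's density condition, which jointly imply the desired Nikolskii inequality. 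Applying the first Corollary with $q=\|\cdot\|_{L^p(\mu)}$ then yields $m(E)\leq m_p(E)$. I do not foresee any genuine obstacle: parts (a) and (b) are purely formal, and the only substantive content in (c) is already contained in the cited examples; the work is just to thread them correctly.
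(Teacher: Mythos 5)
Your proposal is correct and is essentially the argument the paper intends: the corollary is stated there without proof, as an immediate consequence of the Theorem (which gives $m^*(q)=m(q_\sigma)\leq m(q)$ and hence parts (a) and (b) by the one-line substitutions you describe) and, for part (c), of the earlier examples showing that a UPC set has Markov's property and satisfies the density condition, so that $\|\cdot\|_{L^p(\mu)}$ is $E$-admissible and the first Corollary yields $m(E)\leq m_p(E)$. You have threaded the pieces exactly as the authors intended.
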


\begin{rem}
In papers where Markov's inequality in $L^p$ norms was proved with
the best possible exponent, usually it was difficult and
time-consuming to prove the optimality of the exponent, which is
Mar\-kov's exponent for such kind of norms (cf.
\cite{HSzT},\cite{Goe1987},\cite{DaugavetRafalson},\cite{Konjagin78}). By applying the above corollary it is done automatically.

Let us consider another (simple) example. By Bernstein's
inequality $$||\sqrt{1-x^2}P'(x)||_{[-1,1]}\leq (\deg
P)||P||_{[-1,1]}$$ and by Schur's inequality
$$||P||_{[-1,1]}\leq (\deg P+1)||\sqrt{1-x^2}P(x)||_{[-1,1]}$$ we
get Markov's inequality with respect to Schur's norm
$$ ||\sqrt{1-x^2}P'(x)||_{[-1,1]}\leq \deg P(\deg P+1)
||\sqrt{1-x^2}P(x)||_{[-1,1]},$$ with exponent 2, which is, by the
corollary  above, the best possible.

\end{rem}

\section{Markov's exponent for a sequence of polynomials in $\mathbb{P}( \mathbb{C})$.}\label{MarExpSeq}

\begin{defin} Fix a compact set $E\subset\mathbb{C}$ and a sequence
of polynomials $\widehat{\mathcal{P}}=(\widehat{P}_n)_{n\geq
0}\subset \mathbb{P}( \mathbb{C}),\ \deg \widehat{P}_n=n$. Put,
for $k\geq 1$,
$$m_k( \widehat{\mathcal{P}}):=\limsup\limits_{n\rightarrow\infty} \frac{\log (
||\widehat{P}^{(k)}_n||_E/||\widehat{P}_n||_E)}{\log n}$$ and
$m^*( \widehat{\mathcal{P}}):=\limsup\limits_{k\rightarrow\infty}
\frac{1}{k}m_k( \widehat{\mathcal{P}})$.
\end{defin}

\begin{thm} Let $\widehat{\mathcal{P}}=(\widehat{P}_n)_{n\geq
0}$ be an orthonormal system (with respect to a~probabilistic
measure $\mu$ supported on $E$) such that
\[\limsup\limits_{n\rightarrow\infty}\frac{\log
||\widehat{P}_n||_E}{\log n}=\alpha <\infty.\] Then $m(E)=m^*( \widehat{\mathcal{P}})$.

\end{thm}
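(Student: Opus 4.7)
The plan is to establish $m^*(\widehat{\mathcal{P}})\leq m(E)$ and $m^*(\widehat{\mathcal{P}})\geq m(E)$ separately, routing the reverse inequality through the $L^2(\mu)$ norm $q_2(P):=(\int_E|P|^2\,d\mu)^{1/2}$. The upper bound is essentially free: by Remark~\ref{ThmMil} we have $m_k(E)=km(E)$, so iterated Markov's inequality on $E$ applied to each $\widehat{P}_n$ yields $\|\widehat{P}_n^{(k)}\|_E\leq M_{k,\varepsilon}\,n^{km(E)+\varepsilon}\|\widehat{P}_n\|_E$ for every $\varepsilon>0$. Dividing by $\|\widehat{P}_n\|_E$, taking logarithms, dividing by $\log n$, and passing to $\limsup_n$ produces $m_k(\widehat{\mathcal{P}})\leq km(E)$, whence $m^*(\widehat{\mathcal{P}})\leq m(E)$.

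For the opposite inequality, I first observe that the standing hypothesis $\limsup_n\log\|\widehat{P}_n\|_E/\log n=\alpha<\infty$ is precisely the orthonormal growth condition isolated in Example~\ref{Eop}, so it is equivalent to the Nikolskii-type estimate $\|P\|_E\leq B_1(\deg P)^{\beta_1}\|P\|_2$ for every $P\in\mathbb{P}(\mathbb{C})$. Combined with the trivial $\|P\|_2\leq\|P\|_E$ (valid since $\mu$ is probabilistic), this shows that $q_2$ is an $E$-admissible norm on $\mathbb{P}(\mathbb{C})$ whose associated spectral norm is $\|\cdot\|_E$. Applying the main theorem of Section~\ref{AsympExp} to $q=q_2$ then gives $\lim_{k\to\infty}m_k(q_2)/k=m(q_{2,\sigma})=m(E)$.

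The crux is to relate $m_k(q_2)$ to the sequence $\widehat{\mathcal{P}}$. Setting $S_k:=\limsup_n\log\|\widehat{P}_n^{(k)}\|_2/\log n$, the inequality $S_k\leq m_k(q_2)$ is immediate by testing the $L^2$ Markov inequality on $P=\widehat{P}_n$ (where $\|\widehat{P}_n\|_2=1$). Conversely, expanding an arbitrary polynomial $P$ of degree $\leq n$ in the orthonormal basis $P=\sum c_j\widehat{P}_j$ and applying the triangle inequality together with Cauchy--Schwarz to $\|P^{(k)}\|_2\leq\sum_j|c_j|\|\widehat{P}_j^{(k)}\|_2$ produces $m_k(q_2)\leq S_k+\tfrac12$, the extra $\tfrac12$ being the price of a $\sqrt{n+1}$ factor from Cauchy--Schwarz. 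Since $\|\widehat{P}_n^{(k)}\|_2\leq\|\widehat{P}_n^{(k)}\|_E$, we also obtain $\limsup_n\log\|\widehat{P}_n^{(k)}\|_E/\log n\geq S_k$.

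Finally, to convert this into a bound on $m_k(\widehat{\mathcal{P}})$ I invoke the elementary inequality $\limsup_n(a_n-b_n)\geq\limsup_n a_n-\limsup_n b_n$, valid whenever the right-hand side is finite. Applied to $a_n=\log\|\widehat{P}_n^{(k)}\|_E/\log n$ and $b_n=\log\|\widehat{P}_n\|_E/\log n$, and using $\limsup_n b_n=\alpha$, it gives
\[m_k(\widehat{\mathcal{P}})\;\geq\;\limsup_n\frac{\log\|\widehat{P}_n^{(k)}\|_E}{\log n}-\alpha\;\geq\;S_k-\alpha\;\geq\;m_k(q_2)-\alpha-\tfrac12.\]
Dividing by $k$ and letting $k\to\infty$, the additive constants $\alpha+\tfrac12$ are absorbed and we conclude $m^*(\widehat{\mathcal{P}})\geq\lim_k m_k(q_2)/k=m(E)$, completing the proof. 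The step I expect to require the most care is the Cauchy--Schwarz comparison $m_k(q_2)\leq S_k+\tfrac12$: one must verify that the orthonormal-expansion overhead is genuinely a constant independent of $k$, so that after division by $k\to\infty$ it disappears and does not spoil the convergence to $m(E)$.
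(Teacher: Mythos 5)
Your argument is correct, but it reaches the essential inequality $m(E)\le m^*(\widehat{\mathcal{P}})$ by a different route than the paper. The paper stays entirely in the sup norm: it writes an arbitrary $P\in\mathbb{P}_n(\mathbb{C})$ as $P=\sum_{j=0}^n\langle P,\widehat{P}_j\rangle\widehat{P}_j$, bounds each coefficient by $|\langle P,\widehat{P}_j\rangle|\le\|P\|_2\le\|P\|_E$, sums the estimates $\|\widehat{P}_j^{(k)}\|_E\le A\,j^{k(\gamma+\varepsilon)+\alpha+2\varepsilon}$ over $j\le n$ to obtain $m_k(E)\le k(\gamma+\varepsilon)+\alpha+1+2\varepsilon$ directly, and then divides by $k$ using Mil\'owka's identity $\frac1k m_k(E)=m(E)$ from Remark~\ref{ThmMil}. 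You instead detour through $L^2(\mu)$: you certify that $q_2$ is $E$-admissible via Example~\ref{Eop}, invoke the main theorem of Section~\ref{AsympExp} to get $\lim_k m_k(q_2)/k=m(E)$, and transfer $m_k(q_2)$ back to $m_k(\widehat{\mathcal{P}})$ through the intermediate quantity $S_k$, Cauchy--Schwarz, and the superadditivity inequality $\limsup(a_n-b_n)\ge\limsup a_n-\limsup b_n$. Both proofs rest on the same two ingredients --- the orthonormal expansion with coefficients controlled by $\|P\|_E$ (or $\|P\|_2$), and the normalization $m_k/k\to$ Markov's exponent --- and both pay only an additive, $k$-independent overhead ($\alpha+1$ in the paper, $\alpha+\tfrac12$ for you) that disappears after division by $k$. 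The paper's version is shorter because it bypasses the Section~\ref{AsympExp} machinery; yours has the merit of making explicit that the growth hypothesis on $\|\widehat{P}_n\|_E$ is exactly $E$-admissibility of the $L^2(\mu)$ norm. Two small points to record in a final write-up: in the bound $m_k(q_2)\le S_k+\tfrac12$ the finitely many indices $j$ for which $\|\widehat{P}_j^{(k)}\|_2$ does not yet obey the $j^{S_k+\varepsilon}$ bound only affect the constant (and the case $S_k=\infty$ is vacuous), and the infinite case $m^*(\widehat{\mathcal{P}})=\infty$ is absorbed by the already-proved inequality $m^*(\widehat{\mathcal{P}})\le m(E)$. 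Both are routine.
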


\begin{proof} It is clear that $m^*( \widehat{\mathcal{P}})\leq
m(E)$. Assume that $m^*( \widehat{\mathcal{P}})=:\gamma <\infty$.
Fix an $\varepsilon >0$. There exists $k_0$ such that for all
$k\geq k_0$, $m_k(\widehat{\mathcal{P}})\leq k(\gamma +\varepsilon
)$. If $k\geq k_0$ is fixed then for $n\geq n_0$ we have an estimation $$||\widehat{P}^{(k)}_n||_E\leq n^{m_k(
\widehat{\mathcal{P}})+\varepsilon} ||\widehat{P}_n||_E \leq
An^{k(\gamma +\varepsilon )+\alpha +2\varepsilon},$$ where $A$ is
a positive constant.  Let $\langle ,\rangle$ be a scalar product
in $L^2(\mu )$. It is well known that for $P\in \mathbb{P}_n(
\mathbb{C})$ we have
$$P=\sum_{j=0}^n\langle P,\widehat{P}_j\rangle \widehat{P}_j.$$
Hence
\begin{align*}
 ||P^{(k)}||_E\leq &\sum\limits_{j=0}^n|\langle
P,\widehat{P}_j\rangle| ||\widehat{P}_j^{(k)}||_E
\leq ||P||_E\sum\limits_{j=0}^n||\widehat{P}_j^{(k)}||_E\\ &\leq
AB_kn^{k(\gamma +\varepsilon ) +\alpha +1+2\varepsilon} ||P||_E.
\end{align*}
Here $B_k$ is a positive constant. Now we can write
$$m_k(E)/k\leq (\gamma +\varepsilon) +(\alpha +1+2\varepsilon
)/k,$$ which gives
$$m(E)=m^*(E)=\lim\limits_{k\rightarrow\infty}m_k(E)/k\leq
\gamma +\varepsilon .$$ Since $\varepsilon >0$ was arbitrary, we
get $m(E)\leq\gamma$ which finishes the proof.
\end{proof}


\begin{exa} $m([-1,1])=m_1( \widehat{\mathcal{P}}_{\alpha
,\beta}),$ where $\widehat{\mathcal{P}}_{\alpha ,\beta}$ is the
family of normalized Jacobi polynomials.

\end{exa}



{\bf Acknowledgment.} The authors were partially supported by the
NCN grant No. 2013/11/B/ST1/03693. An important part of this paper
was written during authors' visit at University of Padova. We are grateful to the Department of Mathematics for their helpful support and for creating an intellectual atmosphere.

%
%




\vspace{1cm}
M. Baran \\
Department of Applied Mathematics, University of Agriculture  in Kra\-k\'ow, Balicka 253C,  30-198 Krak\'ow, Poland \\
\email{mbaran@ar.krakow.pl}\\
A. Kowalska\\ 
Institute of Mathematics, Pedagogical  University, Podchor\c{a}\.zych 2,  30-084 Krak\'ow, Poland\\
\email{kowalska@up.krakow.pl}

\end{document}